\theoremstyle{change}
\newtheorem{Theorem}{Theorem}[section]
\newtheorem{Def}[Theorem]{Definition}
\newtheorem{Lem}[Theorem]{Lemma}
\newtheorem{Prop}[Theorem]{Proposition}
\newtheorem{Cor}[Theorem]{Corollary}
\newtheorem{Def-Prop}[Theorem]{Definition-Proposition}
\date{}
\begin{document}

\title{On cocycle twisting of compact quantum groups}
\author{Kenny De Commer\footnote{Research Assistant of the Research Foundation - Flanders (FWO -
Vlaanderen).}\\ \small Department of Mathematics,  K.U. Leuven\\
\small Celestijnenlaan 200B, 3001 Leuven, Belgium\\ \\ \small e-mail: kenny.decommer@wis.kuleuven.be}
\maketitle

\newcommand{\acnabla}{\nabla\!\!\!{^\shortmid}}
\newcommand{\undersetmin}[2]{{#1}\underset{\textrm{min}}{\otimes}{#2}}
\newcommand{\otimesud}[2]{\overset{#2}{\underset{#1}{\otimes}}}

\newcommand{\otimesmin}{\underset{\textrm{min}}{\otimes}}
\newcommand{\bigback}{\!\!\!\!\!\!\!\!\!\!\!\!\!\!\!\!\!\!\!\!\!\!\!\!}

\abstract{\noindent In this article, we give a class of examples of compact quantum groups and unitary 2-cocycles on them, such that the twisted quantum groups are non-compact, but still locally compact quantum groups (in the sense of Kustermans and Vaes). This also provides examples of cocycle twists where the underlying C$^*$-algebra of the quantum group changes.}

\section*{Introduction}

\noindent In the seventies, Kac and Vainerman (\cite{Kac1}), and independently Enock and Schwartz (\cite{Eno1}), introduced the notion of (what was called by the latter) a \emph{Kac algebra}, based on the fundamental work of Kac concerning ring groups in the sixties (\cite{Kac2}). Such Kac algebras, which are von Neumann algebras $M$ with a coproduct $\Delta: M\rightarrow M\otimes M$ satisfying certain conditions, can naturally be made into a category, containing as a full sub-category the category of all locally compact groups, but allowing a duality functor which extends the Pontryagin duality functor on the sub-category of all \emph{abelian} locally compact groups. Moreover, Kac algebras with a \emph{commutative} underlying von Neumann algebra arise precisely from locally compact groups, by passing to the $\mathscr{L}^{\infty}$-space of the latter with respect to the left (or right) Haar measure, and with $\Delta$ dual to the multiplication in the group.\\

\noindent However, these Kac algebras do not cover all interesting examples of what could be called `locally compact quantum groups'. In the eighties, Woronowicz introduced `compact matrix quantum groups' (\cite{Wor1}), which are to be seen as quantum versions of compact Lie groups. He also constructed in that paper certain compact matrix quantum groups $SU_q(2)$, which are deformations of the classical $SU(2)$-group by some parameter $q\in \mathbb{R}$ with $0<|q|< 1$. These quantum groups do not fit into the Kac algebra framework. The reason for this is that the antipode of these quantum groups is no longer a $^*$-preserving anti-automorphism, but some unbounded operator on the associated C$^*$-algebra of the quantum group.\\

\noindent A satisfactory theory, covering both the compact quantum groups, the Kac algebras and some isolated examples, was obtained in 2000, when Kustermans and Vaes introduced their C$^*$-algebraic quantum groups (\cite{Kus1}). In a follow-up paper, they also introduced von Neumann algebraic quantum groups (\cite{Kus2}), and proved that the C$^*$-algebra approach and the von Neumann algebra approach were just different ways to look at the same structure (in that one can pass from the von Neumann algebra setup to a (reduced or universal) C$^*$-algebraic setup and back). We also remark that in \cite{VDae2}, an slightly alternative approach to von Neumann algebraic quantum groups was presented. In this paper, we will be mainly using the von Neumann algebraic approach, which simply asks for the existence of a coproduct and invariant weights on a von Neumann algebra (see Definition \ref{DefvNeuQG}).\\

\noindent An interesting and important part of the theory consists in finding construction methods for von Neumann algebraic quantum groups. For example, in \cite{Baa1} the double product construction was worked out, while in \cite{Vae3} the bicrossed product construction was treated. In \cite{DeC2}, we developed another construction method, namely the generalized twisting of a von Neumann algebraic quantum group (by a Galois object for its dual). This covers in particular the twisting by unitary 2-cocycles, special situations of which had been considered by Enock and Vainerman in \cite{Eno2}, and by Fima and Vainerman in \cite{Fim1}.\\

\noindent When we have two von Neumann algebraic quantum groups, one of which is obtained from the other by the above generalized twisting construction, we call them \emph{comonoidally W$^*$-Morita} equivalent. The reason for this name is simple: the underlying von Neumann algebras of two such quantum groups are Morita equivalent (in the sense of Rieffel (\cite{Rie2})), with the equivalence `respecting the coproduct structure'. One can also show then that the representation categories of their associated universal C$^*$-algebraic quantum groups are unitarily comonoidally equivalent (so that they have the `same' \emph{tensor} category of $^*$-representations). The special case of cocycle twisting corresponds to those comonoidal Morita equivalences whose underlying Morita equivalence is (isomorphic to) the identity.\\

\noindent It was shown in \cite{DeC3} that comonoidal W$^*$-Morita equivalence provides a genuine equivalence relation between von Neumann algebraic quantum groups. It is then a natural question to find out which properties are preserved by this equivalence relation. In \cite{BDV1}, it was shown for example that the discreteness of a quantum group is preserved, while its amenability is not. It also follows from the results of that paper that `being discrete and Kac' is \emph{not} preserved. In the general setting of von Neumann algebraic quantum groups, we showed in \cite{DeC3} that the scaling constant is an invariant for comonoidal W$^*$-Morita equivalence. In this article, we will show in a very concrete way that the notion of `being compact' is \emph{not} an invariant. This implies in particular that the representation category of a locally compact quantum group, as a monoidal W$^*$-category, does \emph{not necessarily remember the topology of the quantum group}. In \cite{DeC3}, we have also shown, by more general methods, that `being compact and Kac' \emph{is} an invariant.\\

\noindent This article is divided into two sections. \emph{In the first section}, we recall some notions concerning compact quantum groups, von Neumann algebraic quantum groups and the twisting by unitary 2-cocycles. \emph{In the second section}, we recall the definition of Woronowicz's $SU_q(2)$ quantum groups. We then consider an infinite tensor product of these compact quantum groups over varying $q$'s, with the condition that the $q$'s go to zero sufficiently fast. Taking a limit of appropriate coboundaries, we obtain a 2-cocycle $\Omega$ on this infinite product quantum group which is no longer a coboundary. By giving an explicit formula for a non-finite but semi-finite invariant weight, we conclude that the $\Omega$-twisted quantum group is no longer compact.\\

\noindent \textit{Remarks on notation:} When $A$ is a set, we denote by $\iota$ the identity map $A\rightarrow A$. \\

\noindent We will need the following tensor products: we denote by $\odot$ the algebraic tensor product of two vector spaces over $\mathbb{C}$, by $\otimesmin$ the minimal tensor product of two C$^*$-algebras, and by $\otimes$ the spatial tensor product between von Neumann algebras or Hilbert spaces.\\

\noindent We will further use the following notations concerning weights on von Neumann algebras. If $M$ is a von Neumann algebra, and $\varphi:M^+\rightarrow \lbrack 0,+\infty\rbrack$ an nsf (= normal semi-finite faithful) weight, we denote by $\mathscr{N}_{\varphi}\subseteq M$ the $\sigma$-weakly dense left ideal of square integrable elements: \[\mathscr{N}_{\varphi} = \{x\in M \mid \varphi(x^*x) <\infty\}.\] We denote by $\mathscr{L}^2(M,\varphi)$ the Hilbert space completion of $\mathscr{N}_{\varphi}$ with respect to the inner product \[\langle x,y\rangle = \varphi(y^*x),\] and by $\Lambda_{\varphi}$ the canonical embedding $\mathscr{N}_{\varphi}\rightarrow \mathscr{L}^2(M,\varphi)$. We remark that $\Lambda_{\varphi}$ is ($\sigma$-strong)-(norm) closed. We denote $\mathscr{M}_{\varphi}^+$ for the space of elements $x\in M^+$ for which $\varphi(x)<\infty$, and $\mathscr{M}_{\varphi}$ for the complex linear span of $\mathscr{M}_{\varphi}^+$. One can show then that $\mathscr{M}_{\varphi} = \mathscr{N}_{\varphi}^*\cdot \mathscr{N}_{\varphi}$, i.e.~any element $x$ of $\mathscr{M}_{\varphi}$ can be written as $\sum_{i=1}^n x_i^*y_i$ with $x_i,y_i\in \mathscr{N}_{\varphi}$.\\

\noindent As far as applicable, we use the same notation when considering, more generally, operator valued weights. We also remark here that if $M_1$ and $M_2$ are von Neumann algebras, and $\varphi$ an nsf weight on $M_2$, we can make sense of $(\iota\otimes \varphi)$ as an nsf operator valued weight from $M_1\otimes M_2$ to $M_1$ in a natural way: if $x\in (M_1\otimes M_2)^+$, we let $(\iota\otimes \varphi)(x)$ be the element \[ \omega \in (M_1)_*^+ \rightarrow \varphi((\omega\otimes \iota)(x)) \in \lbrack 0,+\infty\rbrack \] in the extended positive cone of $M_1$. For more information concerning the theory of weights and operator valued weights, we refer to the first chapters of \cite{Tak1}.\\

\section{Compact and von Neumann algebraic quantum groups}

\noindent As we mentioned in the introduction, S.L. Woronowicz developed the notion of a compact matrix quantum group in \cite{Wor1} (there called compact matrix \emph{pseudo}group). Later, he also introduced the more general notion of a compact quantum group (\cite{Wor2}).

\begin{Def}\label{DefComQG} A \emph{compact quantum group} consists of a couple $(A,\Delta)$, where $A$ is a unital C$^*$-algebra, and $\Delta$ a unital $^*$-homomorphism $A\rightarrow A\otimesmin A$ such that \begin{enumerate} \item the map $\Delta$ is coassociative: \[(\Delta\otimes \iota)\Delta = (\iota\otimes \Delta)\Delta,\] and \item the linear subspaces \[\Delta(A)(1\otimes A) := \{\sum_i \Delta(a_i)(1\otimes b_i)\mid a_i,b_i\in A\}\] and \[\Delta(A)(A\otimes 1) := \{\sum_i \Delta(a_i)(b_i\otimes 1)\mid a_i,b_i\in A\}\] are norm-dense in $A\otimesmin A$. \end{enumerate}

\noindent The compact quantum group is called a \emph{compact matrix quantum group} if there exists $n\in \mathbb{N}_0$ and a unitary $u = \sum_{i,j=1}^n u_{ij}\otimes e_{ij} \in A\otimes M_n(\mathbb{C})$, such that $\Delta(u_{ij}) = \sum_{k=1}^n u_{ik}\otimes u_{kj}$ and such that the $u_{ij}$ generate $A$ as a unital C$^*$-algebra. Such a unitary is then called a \emph{fundamental unitary corepresentation}.
\end{Def}

\noindent It is not difficult to show that any compact quantum group $(A,\Delta)$ with $A$ commutative is of the form $(C(\mathfrak{G}),\Delta)$ for some compact group $\mathfrak{G}$, and $\Delta$ dual to the group multiplication. Moreover, $(A,\Delta)$ will then be a compact matrix quantum group iff $\mathfrak{G}$ is a compact Lie group. It is common practice to denote, by analogy, also a general compact quantum group $(A,\Delta)$ as $(C(\mathfrak{G}),\Delta)$, although this notation is now of course purely formal, since there is no underlying object $\mathfrak{G}$.\\

\noindent In \cite{Wor2} (and \cite{VDae1} for the non-separable case), it is proven that to any compact quantum group $(C(\mathfrak{G}),\Delta)$ one can associate a unique state $\varphi$ satisfying \[(\iota\otimes \varphi)\Delta(a) = \varphi(a)1=(\varphi\otimes \iota)\Delta(a),\qquad \forall a\in C(\mathfrak{G}).\] This state is called the \emph{invariant state} of the compact quantum group. It is also proven there that with any compact quantum group, one can associate a Hopf $^*$-algebra $(\textrm{Pol}(\mathfrak{G}),\Delta)$ such that $\textrm{Pol}(\mathfrak{G})\subseteq C(\mathfrak{G})$ is a norm-dense sub-$^*$-algebra, the comultiplication being the restriction of the comultiplication on $C(\mathfrak{G})$. The invariant state is then faithful on $\textrm{Pol}(\mathfrak{G})$. Conversely, any Hopf $^*$-algebra $(\textrm{Pol}(\mathfrak{G}),\Delta)$ possessing an invariant state can be completed to a compact quantum group in essentially two ways. The first construction gives the associated \emph{reduced} compact quantum group. Its underlying C$^*$-algebra $C_r(\mathfrak{G})$ is given as the closure of the image of the GNS-representation of $\textrm{Pol}(\mathfrak{G})$ with respect to the invariant state. The second construction gives the associated \emph{universal} compact quantum group. Its underlying C$^*$-algebra $C_u(\mathfrak{G})$ is now the universal C$^*$-envelope of $\textrm{Pol}(\mathfrak{G})$ (which can be shown to exist). For coamenable compact quantum groups, which are those compact quantum groups possessing both a bounded counit and a \emph{faithful} invariant state, the reduced and universal construction for the underlying Hopf $^*$-algebra both coincide with the original compact quantum group, so that in this case, one only has to specify the Hopf $^*$-algebra to determine completely the associated C$^*$-algebraic structure.\\

\noindent With any compact quantum group $C(\mathfrak{G})$, one can also associate a von Neumann algebra, which we will denote as $\mathscr{L}^{\infty}(\mathfrak{G})$. It is the $\sigma$-weak closure of the image of $\textrm{Pol}(\mathfrak{G})$ under the GNS-representation for $\varphi$. Then $\Delta$ can be completed to a normal unital $^*$-homomorphism $\Delta: \mathscr{L}^{\infty}(\mathfrak{G})\rightarrow \mathscr{L}^{\infty}(\mathfrak{G})\otimes \mathscr{L}^{\infty}(\mathfrak{G})$. This makes $(\mathscr{L}^{\infty}(\mathfrak{G}),\Delta)$ into a von Neumann algebraic quantum group, whose definition we now present.

\begin{Def}\label{DefvNeuQG} (\cite{Kus2}) A \emph{von Neumann algebraic quantum group} is a couple $(M,\Delta)$, consisting of a von Neumann algebra $M$ and a normal unital $^*$-homomorphism $\Delta: M\rightarrow M\otimes M$, such that \begin{enumerate} \item the map $\Delta$ is coassociative: \[(\Delta\otimes \iota)\Delta = (\iota\otimes \Delta)\Delta,\] and \item there exist normal, semi-finite, faithful (nsf) weights $\varphi$ and $\psi$ on $M$ such that, for any state $\omega\in M_*$, we have \[\varphi((\omega\otimes \iota)\Delta(x)) = \varphi(x), \qquad \forall x\in \mathscr{M}_{\varphi}^+\]and\[\psi((\iota\otimes \omega)\Delta(x)) = \psi(x), \qquad \forall x\in \mathscr{M}_{\psi}^+.\] \end{enumerate} \end{Def}

\noindent The weights appearing in this definition turn out to be unique (up to multiplication with a positive non-zero scalar), and are called respectively the \emph{left and right invariant weights}.\\

\noindent The von Neumann algebraic quantum groups $(\mathscr{L}^{\infty}(\mathfrak{G}),\Delta)$ arising from compact quantum groups can be characterized as those von Neumann algebraic quantum groups $(M,\Delta)$ which have a left invariant normal \emph{state}. One can also show that from such a $(\mathscr{L}^{\infty}(\mathfrak{G}),\Delta)$, a $\sigma$-weakly dense Hopf $^*$-subalgebra $(\textrm{Pol}(\mathfrak{G}),\Delta)$ can be reconstructed, providing a one-to-one correspondence between von Neumann algebraic quantum groups with an invariant normal state and Hopf $^*$-algebras with an invariant state.\\

\noindent We now introduce the notion of a unitary 2-cocycle.

\begin{Def} Let $(M,\Delta)$ be a von Neumann algebraic quantum group. A \emph{unitary 2-cocycle} for $(M,\Delta)$ is a unitary element $\Omega\in M\otimes M$ satisfying the 2-cocycle equation \[(\Omega\otimes 1)(\Delta\otimes \iota)(\Omega) = (1\otimes \Omega)(\iota\otimes \Delta)(\Omega).\]\end{Def}

\noindent It is then easily seen that if $(M,\Delta)$ is a von Neumann algebraic quantum group, and $\Omega$ a unitary 2-cocycle for it, we can define a new coproduct $\Delta_{\Omega}$ on $M$ by putting \begin{equation}\label{Def2Coc}\Delta_{\Omega}(x) := \Omega \Delta(x)\Omega^*, \qquad \forall x\in M.\end{equation} The coassociativity of $\Delta_{\Omega}$ then follows precisely from the 2-cocycle equation. A non-trivial result from \cite{DeC2} states that $(M,\Delta_{\Omega})$ in fact possesses invariant nsf weights, so that it is a von Neumann algebraic quantum group. The construction of these weights is rather complicated, and relies on some deep theorems from non-commutative integration theory. However, in the concrete example which we develop in the next section, we will be able to construct the weights on our cocycle twisted quantum group in a fairly straightforward way.\\

\section{Twisting does not preserve compactness}

\noindent The compact quantum groups which we will need will be constructed using the `twisted $SU(2)$' groups from \cite{Wor1} (see also \cite{Wor3}). We recall their definition.

\begin{Def-Prop} Let $q$ be a real number with $0< |q|\leq 1$. Define $\textrm{Pol}(SU_q(2))$ as the unital $^*$-algebra, generated (as a unital $^*$-algebra) by two generators $a$ and $b$ satisfying the relations \[\left\{\begin{array}{lllclll} a^*a +b^*b = 1 && \!\!\!\!\!\!\!\!\!\!\!\!ab = qba \\aa^* + q^2 bb^* = 1 &&\!\!\!\!\!\!\!\!\!\!\!\! a^*b = q^{-1}ba^* \\ &\!\!\!\!\!\!\!\!bb^* = b^*b.\end{array}\right.\]

\noindent Then there exists a Hopf $^*$-algebra structure $(\textrm{Pol}(SU_q(2)),\Delta)$ on $\textrm{Pol}(SU_q(2))$ which satisfies \[\left\{\begin{array}{l} \Delta(a) = a\otimes a - q b^* \otimes b \\ \Delta(b) = b\otimes a + a^*\otimes b.\end{array}\right.\]

\noindent Moreover, this Hopf $^*$-algebra possesses an invariant state $\varphi$, and has a \emph{unique} completion to a compact matrix quantum group $(C(SU_q(2)),\Delta)$.\end{Def-Prop}

\noindent We will always use $a$ and $b$ to denote the generators of $C(SU_q(2))$. Later on however, when we will be working with a sequence of $SU_{q_n}(2)$'s, we will index these generators by the corresponding index $n$ of $q_n$. We will follow the same convention for the comultiplication, the invariant state, and the other special elements in $C(SU_q(2))$ which we will later introduce.\\

\noindent The following proposition gives us more information about how the underlying C$^*$-algebra and the associated invariant state of $C(SU_q(2))$ look like.

\begin{Prop}\label{PropConcRep} (\cite{Wor3}) Let $q$ be a real number with $0< |q| < 1$. Let $\mathscr{H}$ be the Hilbert space $l^2(\mathbb{N}) \otimes l^2(\mathbb{Z})$, whose canonical basis elements we denote as $\xi_{n,k}$ (and with the convention $\xi_{n,k} =0$ when $n<0$). Then there exists a faithful unital $^*$-representation of $C(SU_q(2))$ on $\mathscr{H}$, determined by \[\left\{\begin{array}{l} \pi(a) \xi_{n,k} = \sqrt{1-q^{2n}} \xi_{n-1,k},\\ \pi(b)\xi_{n,k} = q^{n} \xi_{n,k+1}.\end{array}\right.\]

\noindent The invariant state $\varphi$ on $C(SU_q(2))$ is given by \[\varphi(x) = (1-q^2) \sum_{n\in \mathbb{N}} q^{2n} \langle \pi(x) \xi_{n,0},\xi_{n,0}\rangle.\]

\end{Prop}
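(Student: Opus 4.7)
The plan splits into two largely independent parts: constructing the representation $\pi$ and identifying the invariant state $\varphi$ via the explicit formula.

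For the representation, I would first verify by direct computation on the orthonormal basis $\{\xi_{n,k}\}$ that the stated formulas for $\pi(a)$ and $\pi(b)$ extend to bounded operators on $\mathscr{H}$; boundedness is immediate since $(1-q^{2n})^{1/2}\leq 1$ and $|q^n|\leq 1$ for $n\geq 0$. Computing adjoints yields $\pi(a)^*\xi_{n,k} = (1-q^{2(n+1)})^{1/2}\xi_{n+1,k}$ and $\pi(b)^*\xi_{n,k} = q^n\xi_{n,k-1}$. Each of the six defining relations is then checked by evaluating on $\xi_{n,k}$; for example, $\pi(a^*a+b^*b)\xi_{n,k}=((1-q^{2n})+q^{2n})\xi_{n,k}=\xi_{n,k}$, and $\pi(ab)\xi_{n,k}=q^n(1-q^{2n})^{1/2}\xi_{n-1,k+1}=\pi(qba)\xi_{n,k}$. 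This gives a bounded $^*$-representation of $\textrm{Pol}(SU_q(2))$, and by the uniqueness of the C$^*$-completion asserted in the preceding Definition-Proposition (coamenability of $SU_q(2)$), $\pi$ extends uniquely to a $^*$-representation of $C(SU_q(2))$.

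For the invariant state, let $\tilde\varphi$ denote the functional on $C(SU_q(2))$ defined by the stated formula. Convergence is immediate since $\sum q^{2n}<\infty$ and $\pi(x)$ is bounded; positivity is clear and $\tilde\varphi(1)=(1-q^2)\sum_n q^{2n}=1$, so $\tilde\varphi$ is a state. By uniqueness of the Haar state, it suffices to prove invariance, and the cleanest route is to use the PBW-type spanning set of $\textrm{Pol}(SU_q(2))$ consisting of the monomials $a^k b^l (b^*)^m$ for $k,l,m\geq 0$ and $(a^*)^j b^l (b^*)^m$ for $j\geq 1$. A direct evaluation on $\xi_{n,0}$, using $\pi(b^l(b^*)^m)\xi_{n,k}=q^{(l+m)n}\xi_{n,k+l-m}$ and tracking when the result has the same indices as $\xi_{n,0}$, gives $\tilde\varphi(a^k b^l(b^*)^m)=\delta_{k,0}\delta_{l,m}\frac{1-q^2}{1-q^{2(l+1)}}$ and $\tilde\varphi((a^*)^j b^l(b^*)^m)=0$. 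One then checks $(\iota\otimes\tilde\varphi)\Delta(x)=\tilde\varphi(x)1=(\tilde\varphi\otimes\iota)\Delta(x)$ on each such monomial by expanding $\Delta$ via $\Delta(a)=a\otimes a-qb^*\otimes b$ and $\Delta(b)=b\otimes a+a^*\otimes b$, and collecting terms.

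For faithfulness of $\pi$, suppose $x\in\textrm{Pol}(SU_q(2))$ satisfies $\pi(x)=0$; then $\tilde\varphi(y^*x)=0$ for every $y$, and the identification $\tilde\varphi=\varphi$ together with the faithfulness of the Haar state on $\textrm{Pol}(SU_q(2))$ forces $x=0$ upon setting $y=x$. Thus $\pi$ is injective on the norm-dense subalgebra $\textrm{Pol}(SU_q(2))$, and uniqueness of the C$^*$-completion (which in particular gives uniqueness of the C$^*$-norm on this subalgebra) promotes this to an isometric, hence faithful, extension to $C(SU_q(2))$. The main obstacle is purely computational rather than conceptual: a clean invariance verification requires some combinatorial bookkeeping with the $q$-commutation relations when expanding powers of the coproduct, but no new ideas are needed.
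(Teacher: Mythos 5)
The paper offers no proof of this proposition at all: it is quoted verbatim from Woronowicz's twisted $SU(2)$ paper, so there is nothing internal to compare against. Judged on its own, your self-contained verification is the standard one and most of it is sound: the adjoint formulas, the check of the six relations on the basis vectors, the computation $\tilde\varphi(a^kb^l(b^*)^m)=\delta_{k,0}\delta_{l,m}(1-q^2)/(1-q^{2(l+1)})$, and the reduction of everything to uniqueness of the Haar state are all correct. The invariance check on the PBW monomials is, as you say, pure bookkeeping (in practice one first uses the circle action $b\mapsto\lambda b$ to kill all monomials with $l\neq m$ or with a nontrivial power of $a$ or $a^*$, which cuts the computation down to $(\iota\otimes\tilde\varphi)\Delta((b^*b)^l)$).

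There is, however, a genuine gap in the faithfulness step. Injectivity of $\pi$ on the dense $^*$-subalgebra $\textrm{Pol}(SU_q(2))$, even combined with uniqueness of the compact-quantum-group completion, does \emph{not} imply that the extension to $C(SU_q(2))$ is injective: the C$^*$-seminorm $x\mapsto\|\pi(x)\|$ induced by an arbitrary $^*$-representation is only dominated by the universal norm, and being a \emph{norm} on the dense subalgebra is strictly weaker than dominating the reduced norm. (Concretely, for the coamenable example $\mathbb{T}$, restricting Laurent polynomials to an infinite proper closed subset $K\subset\mathbb{T}$ gives an injective $^*$-representation of $\textrm{Pol}(\mathbb{T})$ whose extension $C(\mathbb{T})\rightarrow C(K)$ is far from injective.) The repair uses exactly the identification $\tilde\varphi=\varphi$ you have already established: since $\varphi=\omega\circ\pi$ for the normal state $\omega=(1-q^2)\sum_nq^{2n}\langle\,\cdot\,\xi_{n,0},\xi_{n,0}\rangle$ on $B(\mathscr{H})$, the GNS representation of $\varphi$ is weakly contained in $\pi$, so $\|x\|_{C_r}\leq\|\pi(x)\|\leq\|x\|_{C_u}$ on $\textrm{Pol}(SU_q(2))$; coamenability (the uniqueness statement of the Definition-Proposition) forces $\|x\|_{C_r}=\|x\|_{C_u}$, hence $\pi$ is isometric and extends to a faithful representation of $C(SU_q(2))$. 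With this substitution your argument is complete.
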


\noindent We now introduce special elements which will be of importance later on. Define, in the notation of the previous proposition, matrix units $f_{kl}$ on $l^2(\mathbb{N})\otimes l^2(\mathbb{Z})$, by putting $f_{mn} \xi_{r,k} = \delta_{r,n} \xi_{m,k}$, with $\delta$ the Kronecker delta, and where $m,n$ take values in $\mathbb{N}$. It is clear then that each $f_{mn}$ is in the unital C$^*$-algebra generated by $\pi(a)$. Hence $e_{mn} := \pi^{-1}(f_{mn})$ is an element of $C(SU_q(2))$. We define elements $p,p',w \in C(SU_q(2))$ by the following formulas:\begin{eqnarray}\label{Eqnp} p&:=& e_{00}, \\ \label{Eqnp'}  p' &:=& e_{11}, \\ \label{Eqnw} w &:=& e_{01}+e_{12}+ e_{20} + \sum_{k=3}^{\infty} e_{kk}.\end{eqnarray} Thus, with respect to the matrix units $e_{mn}$, the element $w$ is the unitary \[ w = \left(\begin{array}{llll} 0 & 1 & 0 & 0\\ 0 & 0 & 1 & 0 \\ 1 & 0 & 0 & 0 \\ 0 & 0 &0 & I \end{array}\right).\]

\vspace{0.7cm}

\noindent We need to form infinite tensor products of the above quantum groups (see \cite{Wan2} for more detailed information). Let $\mathbf{q}:=(q_n)_{n\in \mathbb{N}_0}$ be a sequence of reals satisfying $0<|q_n|< 1$. Then we can form an inductive sequence \[\overset{n}{\underset{k=1}{\odot}}(\textrm{Pol}(SU_{q_k}(2)),\Delta_{k})\] of Hopf $^*$-algebras, by tensoring with 1 to the right at each step. We denote the inductive limit as $(\textrm{Pol}(SU_{\mathbf{q}}(2)),\Delta_{\mathbf{q}})$. It is easy to see that this is again a Hopf $^*$-algebra with an invariant state $\varphi_{\mathbf{q}}$, which is the pointwise limit of the functionals $\overset{n}{\underset{k=1}{\odot}} \varphi_{k}$. The associated compact quantum group will then again be coamenable, with underlying C$^*$-algebra $C(SU_{\mathbf{q}}(2))$ the universal infinite tensor product of the $C(SU_{q_k}(2))$. This will equal the reduced C$^*$-tensor product with respect to the states $\varphi_{k}$, by nuclearity of the $C(SU_{q_k}(2))$ (see the appendix of \cite{Wor3}).\\

\noindent We will show now that those $(C(SU_{\mathbf{q}}(2)),\Delta_{\mathbf{q}})$ for which $\mathbf{q}$ is square summable possess the property enunciated in the abstract, namely: they possess a unitary 2-cocycle which allows to twist them into a non-compact quantum group. Therefore, \emph{we now fix some $\mathbf{q}$ satisfying the property of square summability.}\\

\noindent We need to show some properties of $\mathscr{L}^{\infty}(SU_{\mathbf{q}}(2))$. We begin with some well-known general lemmas.

\begin{Lem}\label{LemConL2} Let $M\subseteq B(\mathscr{H})$ be a von Neumann algebra, and $\xi\in \mathscr{H}$ a separating vector for $M$. Let $x_n$ be a bounded sequence in $M$ for which $x_n \xi$ converges to a vector $\eta$. Then $x_n$ converges in the $\sigma$-strong topology.\end{Lem}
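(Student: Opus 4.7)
The plan is to exploit that a separating vector for $M$ is cyclic for the commutant $M'$, so that the hypothesis on $\xi$ automatically upgrades to strong convergence on a dense subspace, which via boundedness extends to the whole Hilbert space.

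First I would note that since $\xi$ is separating for $M$, the vector $\xi$ is cyclic for $M'$, so $M'\xi$ is dense in $\mathscr{H}$. Then for any $y\in M'$, I would compute
\[ x_n y\xi = y x_n\xi \longrightarrow y\eta, \]
which shows that the sequence $(x_n)$ converges pointwise on the dense subspace $M'\xi$.

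Next I would upgrade this to strong convergence on all of $\mathscr{H}$. Given $\zeta\in \mathscr{H}$ and $\varepsilon>0$, pick $y\xi\in M'\xi$ with $\|\zeta - y\xi\|<\varepsilon$; the boundedness of $(x_n)$ (say by a constant $K$) together with the convergence of $(x_n y\xi)$ gives that $(x_n\zeta)$ is Cauchy, hence convergent. Denoting its limit by $x\zeta$, one obtains a linear map $x$ with $\|x\|\leq K$, and by the strong closure of $M$ in $B(\mathscr{H})$, one has $x\in M$.

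Finally I would promote strong convergence of a bounded sequence to $\sigma$-strong convergence, which is the standard observation that, given a sequence $(\zeta_k)\subseteq \mathscr{H}$ with $\sum_k\|\zeta_k\|^2<\infty$, one splits the sum at a large index $N$ so that the tail is controlled uniformly in $n$ by $(K+\|x\|)^2\sum_{k\geq N}\|\zeta_k\|^2$, while the finite initial segment converges by strong convergence. This is the only mildly technical step, but it is essentially routine once boundedness is in hand; the real content of the lemma is the first observation, namely that separating for $M$ means cyclic for $M'$.
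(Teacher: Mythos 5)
Your proof is correct; the paper itself states this lemma without proof (introducing it as one of ``some well-known general lemmas''), and your argument --- separating for $M$ means cyclic for $M'$, hence pointwise convergence of $x_n$ on the dense subspace $M'\xi$, then a $3\varepsilon$-argument using the uniform bound to get strong convergence, and finally the standard coincidence of the strong and $\sigma$-strong topologies on bounded sets --- is exactly the intended standard argument. No gaps.
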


\begin{Lem}\label{LemConL2Ten} (\cite{Gui1}, \emph{Proposition 1.\i}) Let $(\mathscr{H}_n,\xi_n)$ be a sequence of Hilbert spaces with distinguished unit vectors, and let $(\mathscr{H},\xi)$ be their infinite tensor product. Let $\eta_n \in \mathscr{H}_n$ be non-zero vectors with $\|\eta_n\|\leq 1$. Then $(\overset{n}{\underset{k=1}{\otimes}} \eta_k)\otimes (\overset{\infty}{\underset{k=n+1}{\otimes}} \xi_k) \in \mathscr{H}$ converges to a non-zero vector $\overset{\infty}{\underset{k=1}{\otimes}} \eta_k$ if $\sum | 1-\langle \eta_k,\xi_k\rangle| <\infty$. \end{Lem}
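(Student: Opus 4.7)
The plan is to show that the sequence
\[ v_n := \Bigl(\overset{n}{\underset{k=1}{\otimes}} \eta_k\Bigr) \otimes \Bigl(\overset{\infty}{\underset{k=n+1}{\otimes}} \xi_k\Bigr) \]
is Cauchy in $\mathscr{H}$ and that its limit has positive norm. Using the factorization of the inner product on infinite tensor products slot by slot, everything reduces to controlling the two numerical sequences
\[ P_n := \prod_{k=1}^n \|\eta_k\|^2, \qquad R_n := \prod_{k=1}^n \langle \eta_k,\xi_k\rangle, \]
since for $m>n$ one has $\|v_n\|^2=P_n$, $\|v_m\|^2=P_m$, and
\[ \langle v_m,v_n\rangle = \prod_{k=1}^n \|\eta_k\|^2 \cdot \prod_{k=n+1}^m \langle \eta_k,\xi_k\rangle = P_n \cdot (R_m/R_n). \]

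First I would invoke the classical criterion that $\sum_k |1-\langle \eta_k,\xi_k\rangle|<\infty$ forces the infinite product $\prod_k \langle \eta_k,\xi_k\rangle$ to converge absolutely to some non-zero complex number $R$; in particular the factors tend to $1$, whence $\|\eta_k\|\to 1$ by Cauchy--Schwarz. Using the elementary bound
\[ 0 \leq 1-\|\eta_k\|^2 \leq 1-|\langle\eta_k,\xi_k\rangle|^2 \leq 2\bigl|1-\langle\eta_k,\xi_k\rangle\bigr|, \]
one gets $\sum_k(1-\|\eta_k\|^2)<\infty$, and since each factor $\|\eta_k\|^2$ is strictly positive ($\eta_k\neq 0$) the real product $P_n$ converges to some $P>0$.

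Finally, for $m>n$,
\[ \|v_m-v_n\|^2 = P_m+P_n - 2 P_n\,\mathrm{Re}(R_m/R_n), \]
and as $m,n\to\infty$ we have $P_m,P_n\to P$ while $R_m/R_n\to R/R=1$, so the right-hand side tends to $2P-2P=0$. Thus $(v_n)$ is Cauchy and converges to a vector $v\in\mathscr{H}$ with $\|v\|^2=\lim_n P_n = P > 0$, so $v\neq 0$. The whole argument is essentially bookkeeping; the one semi-nontrivial ingredient is the classical equivalence between $\sum|1-z_k|<\infty$ and convergence of $\prod z_k$ to a non-zero limit, which I would take as standard. There is no serious obstacle.
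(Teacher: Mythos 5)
The paper gives no proof of this lemma at all --- it is imported verbatim from Guichardet (\cite{Gui1}, Proposition 1.i) --- so there is nothing internal to compare against. Your argument is the standard one and is essentially correct: the slot-by-slot factorization of the inner product on decomposable vectors of the incomplete tensor product is the whole content, and the chain $0\leq 1-\|\eta_k\|^2\leq 1-|\langle\eta_k,\xi_k\rangle|^2\leq 2|1-\langle\eta_k,\xi_k\rangle|$ correctly reduces the convergence of $P_n$ to a strictly positive limit to the hypothesis.

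One imprecision should be repaired. The hypothesis $\sum_k|1-\langle\eta_k,\xi_k\rangle|<\infty$ does \emph{not} force every $\langle\eta_k,\xi_k\rangle$ to be non-zero; it only forces this for all but finitely many $k$ (a vanishing factor contributes $1$ to the sum). If some factor vanishes, then $R_n=0$ for all large $n$, the ``classical criterion'' gives $R=0$ rather than a non-zero limit, and the quotient $R_m/R_n$ you manipulate is undefined. The lemma is nevertheless true in that case (take $\eta_1\perp\xi_1$ and $\eta_k=\xi_k$ for $k\geq 2$: the limit vector is non-zero even though $R=0$), and your proof survives with a cosmetic change: work directly with the tail products $T_{n,m}:=\prod_{k=n+1}^m\langle\eta_k,\xi_k\rangle$, which is what actually occurs in $\langle v_m,v_n\rangle=P_n\,T_{n,m}$, and observe that $T_{n,m}\rightarrow 1$ as $n,m\rightarrow\infty$ by applying the convergence criterion to the eventually non-vanishing tail of the product. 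With $R_m/R_n$ replaced by $T_{n,m}$ throughout, the Cauchy estimate $\|v_m-v_n\|^2=P_m+P_n-2P_n\,\mathrm{Re}(T_{n,m})\rightarrow 0$ and the conclusion $\|v\|^2=\lim_nP_n>0$ go through exactly as you wrote them.
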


\noindent Recall the special element $p$ (defined at (\ref{Eqnp})), for which we will now also use index notation.

\begin{Lem}\label{LemInfTenp} The sequence $(\overset{n}{\underset{k=1}{\otimes}} p_k)\otimes 1$ converges $\sigma$-strongly in $\mathscr{L}^{\infty}(SU_{\mathbf{q}}(2))$ to an operator $\overset{\infty}{\underset{k=1}{\otimes}} p_k$, while the sequence $1\otimes (\overset{\infty}{\underset{k=n+1}{\otimes}} p_k)$ converges $\sigma$-strongly to 1.\end{Lem}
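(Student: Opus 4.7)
My strategy is to reduce both $\sigma$-strong convergence statements to norm convergence of vectors in the GNS Hilbert space of $\varphi_{\mathbf{q}}$ via Lemma \ref{LemConL2}, and then to verify the latter using Guichardet's criterion (Lemma \ref{LemConL2Ten}). As preparation I record three facts. First, each $\varphi_k$ is a faithful normal state on $\mathscr{L}^\infty(SU_{q_k}(2))$, and the infinite tensor product of faithful normal states is again faithful and normal; hence $\varphi_{\mathbf{q}}$ is faithful on $\mathscr{L}^\infty(SU_{\mathbf{q}}(2))$ and its GNS cyclic vector $\xi_{\mathbf{q}}$ is also separating. Second, $(\mathscr{L}^2(SU_{\mathbf{q}}(2)),\xi_{\mathbf{q}})$ is naturally the Guichardet-type infinite tensor product of the pointed Hilbert spaces $(\mathscr{L}^2(SU_{q_k}(2)),\xi_k)$. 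Third, the concrete formula of Proposition \ref{PropConcRep} applied to $p_k = e_{00}^{(k)}$ yields $\varphi_k(p_k) = 1-q_k^2$, so that $\eta_k := p_k\xi_k$ satisfies $\|\eta_k\|^2 = \varphi_k(p_k) = 1-q_k^2 \leq 1$ and $\langle\eta_k,\xi_k\rangle = 1-q_k^2$.

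For the first assertion, the sequence $\bigl(\bigotimes_{k=1}^n p_k\bigr)\otimes 1$ consists of projections and is therefore bounded by $1$. Applied to $\xi_{\mathbf{q}}$ it produces $\bigl(\bigotimes_{k=1}^n p_k\xi_k\bigr)\otimes\bigl(\bigotimes_{k>n}\xi_k\bigr)$. Square summability of $\mathbf{q}$ gives $\sum_k |1-\langle\eta_k,\xi_k\rangle| = \sum_k q_k^2 < \infty$, so Lemma \ref{LemConL2Ten} yields norm convergence in $\mathscr{L}^2(SU_{\mathbf{q}}(2))$ to the nonzero vector $\bigotimes_{k=1}^\infty p_k\xi_k$. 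Lemma \ref{LemConL2} then upgrades this to $\sigma$-strong convergence of the operators to an element $\bigotimes_{k=1}^\infty p_k \in \mathscr{L}^\infty(SU_{\mathbf{q}}(2))$.

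For the second assertion, running the preceding argument inside the tail factor $\bigotimes_{k>n}\mathscr{L}^\infty(SU_{q_k}(2))$ defines $\bigotimes_{k>n}p_k$ as a $\sigma$-strong limit, for every $n$. To see that $1\otimes\bigotimes_{k>n}p_k \to 1$ $\sigma$-strongly, I invoke boundedness by $1$ and Lemma \ref{LemConL2} once more, reducing matters to the estimate
\[\Bigl\|\xi_{\mathbf{q}} - \bigl(1\otimes {\textstyle\bigotimes_{k>n}} p_k\bigr)\xi_{\mathbf{q}}\Bigr\|^2 = 1 - \prod_{k>n}(1-q_k^2),\]
which follows by direct expansion of the inner product using $\|p_k\xi_k\|^2 = \langle\xi_k,p_k\xi_k\rangle = 1-q_k^2$. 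Square summability of $\mathbf{q}$ guarantees that $\prod_k(1-q_k^2)$ converges to a nonzero value, so its tail tends to $1$ and the right-hand side vanishes as $n\to\infty$. The only mildly delicate step in the whole argument is the preparatory identification of $\xi_{\mathbf{q}}$ as separating and of $\mathscr{L}^2(SU_{\mathbf{q}}(2))$ as a Guichardet tensor product; once these are in hand, the rest is a routine computation.
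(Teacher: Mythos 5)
Your proof is correct and follows essentially the same route as the paper: identify $(\mathscr{L}^2(SU_{\mathbf{q}}(2)),\xi_{\mathbf{q}})$ with the Guichardet infinite tensor product, apply Lemma \ref{LemConL2Ten} with $\varphi_k(p_k)=1-q_k^2$ and square summability for the first assertion, and use the projection identity $\|\xi_{\mathbf{q}}-s_n\xi_{\mathbf{q}}\|^2=1-\prod_{k>n}(1-q_k^2)$ together with Lemma \ref{LemConL2} for the second. No gaps.
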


\begin{proof} Note first that the GNS construction of $\mathscr{L}^{\infty}(SU_{\mathbf{q}}(2))$ with respect to $\varphi_{\mathbf{q}}$ can be identified with $\overset{\infty}{\underset{k=1}{\otimes}} (\mathscr{L}^2(SU_{q_k}(2)),\xi_k)$, with $\xi_k$ the separating and cyclic vector associated to $\varphi_{k}$. Combining Lemma \ref{LemConL2} and Lemma \ref{LemConL2Ten}, we only have to see if $\sum (1-\varphi_{k}(p_k))<\infty$ to have the first statement of the lemma. Since $\varphi_{k}(p_k) = 1-q_k^2$, this follows by the assumption that $q_k$ is a square summable sequence.\\

\noindent Then we can of course also make sense of $x\otimes (\overset{\infty}{\underset{k=n+1}{\otimes}} p_k)$, for any $x\in \overset{n}{\underset{k=1}{\otimes}} \mathscr{L}^{\infty}(SU_{q_k}(2))$. Since \begin{eqnarray*} \| ((\overset{n}{\underset{k=1}{\otimes}}\xi_{q_k})\otimes (\overset{\infty}{\underset{k=n+1}{\otimes}} p_k\xi_{q_k})) - \overset{\infty}{\underset{k=1}{\otimes}}\xi_{q_k} \|^2 &=& 1-\prod_{k=n+1}^{\infty} \varphi_{q_k}(p_k)\\ &=& 1-\prod_{k=n+1}^{\infty} (1-q_k^2),\end{eqnarray*} the second statement follows from the convergence of $\overset{\infty}{\underset{k=1}{\prod}} (1-q_k^2)$ to a non-zero number, which in turn follows easily from the square summability of the $q_n$.

\end{proof}

\begin{Cor}\label{CorDens} Denote $E_n(x) = s_nxs_n$, where $s_n = 1\otimes (\overset{\infty}{\underset{k=n+1}{\otimes}} p_k)$ and $x\in \mathscr{L}^{\infty}(SU_{\mathbf{q}}(2))$. Then $E_n(x)$ converges to $x$ in the $\sigma$-strong topology.\end{Cor}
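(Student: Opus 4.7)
The plan is to reduce the corollary directly to Lemma \ref{LemInfTenp}, which tells us that $s_n \to 1$ $\sigma$-strongly. Since each $s_n$ is a projection (a $\sigma$-strong limit of projections in the sense of the previous lemma), we have $\|s_n\| \leq 1$, so the sequence is uniformly bounded. The goal is then to pass from $s_n \to 1$ $\sigma$-strongly to $s_n x s_n \to x$ $\sigma$-strongly, for arbitrary fixed $x \in \mathscr{L}^\infty(SU_{\mathbf{q}}(2))$.

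The standard way to organize this is via the telescoping decomposition
\[
E_n(x) - x \;=\; s_n x s_n - x \;=\; s_n\,x\,(s_n - 1) \;+\; (s_n - 1)\,x.
\]
For the second summand, I would use that right multiplication by a fixed element $x$ is $\sigma$-strongly continuous: if $\omega$ is a normal positive functional on $M$, then $\omega_x(\,\cdot\,) := \omega(x^*\cdot x)$ is again normal and positive, and $\omega((s_n - 1)x)^*((s_n - 1)x)) = \omega_x((s_n-1)^*(s_n-1)) \to 0$ by $\sigma$-strong convergence of $s_n - 1$ to $0$. For the first summand, I would first show $x(s_n - 1) \to 0$ $\sigma$-strongly (left multiplication by the fixed $x$ preserves $\sigma$-strong convergence, since $\|x\eta\| \leq \|x\|\cdot \|\eta\|$ in the GNS picture for any normal state), and then multiply on the left by the uniformly bounded sequence $s_n$, which again preserves $\sigma$-strong convergence to $0$.

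I do not expect any genuine obstacle: this is essentially the statement that multiplication is jointly $\sigma$-strongly continuous on norm-bounded subsets of $M \times M$, applied to the bounded sequence $(s_n, s_n)$ converging to $(1,1)$. The only point requiring mild care is the asymmetry between left and right multiplication with respect to the $\sigma$-strong (as opposed to $\sigma$-strong${}^*$) topology, which is why I would split the difference $E_n(x) - x$ as above rather than trying to handle $s_n x s_n$ in one stroke. The rest is automatic once Lemma \ref{LemInfTenp} is in hand.
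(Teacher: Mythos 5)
Your argument is correct. The paper offers no proof of this corollary at all---it is stated as an immediate consequence of Lemma \ref{LemInfTenp}---and your telescoping decomposition $s_nxs_n - x = s_nx(s_n-1) + (s_n-1)x$, combined with the uniform boundedness of the projections $s_n$ and the separate continuity of left and right multiplication for the $\sigma$-strong topology, is precisely the standard argument the author leaves implicit.
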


\noindent Recall the special elements $w$ defined by the formula (\ref{Eqnw}). We again denote this element, when we regard it inside some $C(SU_{q_n}(2))$, as $w_n$.

\begin{Lem} Let $0< |q|< 1$. Let $\xi$ be the cyclic separating vector in the GNS-construction for $\mathscr{L}^{\infty}(SU_q(2))$ w.r.t.~the invariant state $\varphi$. Then \[\| (w^*-a^*)\xi\| \leq 3q^2.\] \end{Lem}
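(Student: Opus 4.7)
The plan is to translate the Hilbert-space norm into a scalar computation via $\xi = \Lambda_{\varphi}(1)$ and the GNS identity $\|y\xi\|^2 = \varphi(y^*y)$, evaluate the four resulting terms using the concrete representation of Proposition \ref{PropConcRep}, and then estimate the resulting scalar expression by an elementary inequality.

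First I would expand
\[ \|(w^*-a^*)\xi\|^2 \;=\; \varphi(ww^*) - \varphi(wa^*) - \varphi(aw^*) + \varphi(aa^*). \]
Since $w$ is unitary, $\varphi(ww^*) = 1$. For $\varphi(aa^*)$, from $\|\pi(a^*)\xi_{n,0}\|^2 = 1-q^{2(n+1)}$ and summing two geometric series, one finds $\varphi(aa^*) = \frac{1}{1+q^2}$. For $\varphi(wa^*)$, the key observation is that $\pi(a^*)$ shifts $\xi_{n,0} \mapsto \sqrt{1-q^{2(n+1)}}\,\xi_{n+1,0}$, while by construction $\pi(w)$ cyclically permutes $\xi_{0,0},\xi_{1,0},\xi_{2,0}$ and fixes $\xi_{n,0}$ for $n\geq 3$. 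Consequently only the $n=0$ and $n=1$ diagonal matrix elements in the sum defining $\varphi(wa^*)$ are nonzero, and one obtains
\[ \varphi(wa^*) \;=\; (1-q^2)^{3/2} + q^2(1-q^2)\sqrt{1-q^4}. \]
This quantity is real, so $\varphi(aw^*) = \overline{\varphi(wa^*)}$ equals it. Combining everything gives the explicit identity
\[ \|(w^*-a^*)\xi\|^2 \;=\; \frac{2+q^2}{1+q^2} - 2(1-q^2)^{3/2} - 2q^2(1-q^2)\sqrt{1-q^4}. \]

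Finally I would apply the second-order estimate $\sqrt{1-x} \geq 1 - \frac{x}{2} - \frac{x^2}{2}$ for $x\in[0,1]$, which follows from the identity $1-\sqrt{1-x} = \frac{x}{1+\sqrt{1-x}}$ together with the crude bound $1-\sqrt{1-x}\leq x$. Applied to the two radicals above this gives $\varphi(wa^*) \geq 1 - \frac{q^2}{2} - q^4 - \frac{q^6}{2}$, whence
\[ \|(w^*-a^*)\xi\|^2 \;\leq\; \frac{q^4}{1+q^2} + 2q^4 + q^6 \;\leq\; 4q^4, \]
and taking square roots yields the claim (in fact with the slightly better constant $2$ in place of $3$).

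The only subtle point is the calibration of the scalar inequality: the two explicit terms in $\varphi(wa^*)$ are arranged precisely so as to cancel the constant and $q^2$-order behaviour of $\frac{1}{2}\cdot\frac{2+q^2}{1+q^2}$, so a first-order estimate like $\sqrt{1-x}\geq 1-x$ would merely produce an $O(q^2)$ bound on $\|(w^*-a^*)\xi\|^2$; the $x^2/2$ correction term in the lower bound for $\sqrt{1-x}$ is essential for reaching the correct $O(q^2)$ rate in the norm.
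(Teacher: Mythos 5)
Your proof is correct and follows essentially the same route as the paper: both reduce $\|(w^*-a^*)\xi\|^2$ to $\varphi\bigl((w-a)(w^*-a^*)\bigr)$, evaluate it via the concrete representation of Proposition \ref{PropConcRep} (using that $\pi(w)$ cyclically permutes $\xi_{0,0},\xi_{1,0},\xi_{2,0}$), and finish with an elementary estimate on $1-\sqrt{1-x}$. Your four-term expansion and second-order bound $\sqrt{1-x}\geq 1-\tfrac{x}{2}-\tfrac{x^2}{2}$ even yield the slightly sharper constant $2q^2$, which of course still implies the stated bound $3q^2$.
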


\begin{proof} In the matrix representation introduced just after Proposition \ref{PropConcRep}, it is easy to calculate that \[(w-a)(w^*-a^*) = \left(\begin{array}{cccccc} (1-\sqrt{1-q^2})^2 & 0 & 0 & 0 & 0 & \ldots \\ 0 & (1-\sqrt{1-q^4})^2 & 0 & 0 & 0 & \ldots \\ 0 & 0 & 2-q^6 & -\sqrt{1-q^6} & 0 & \ldots \\ 0 & 0 & -\sqrt{1-q^6} & 2-q^8 & -\sqrt{1-q^8} & \ldots \\ 0 & 0 & 0 & -\sqrt{1-q^8} & 2-q^{10} & \ldots \\ \vdots & \vdots & \vdots & \vdots & \vdots & \ddots \end{array}\right).\]

\noindent Since $1-\sqrt{1-c} \leq c$ for $0\leq c \leq 1$, we have \begin{eqnarray*} && \bigback \varphi((w-a)(w^*-a^*))\\ &=& (1-q^2) ((1-\sqrt{1-q^2})^2 + q^2(1-\sqrt{1-q^4})^2 + q^4(2-q^6) + q^6(2-q^8) + \ldots) \\ &\leq& 2(1-q^2)(q^4+ q^{10}+ q^4 + q^6 + \ldots) \\ &\leq &  9q^4.  \end{eqnarray*} So $\| (w^*-a^*)\xi\| \leq 3q^2$.

\end{proof}

\begin{Theorem}\label{PropConv2coc} The $\sigma$-strong$^*$ limit $\overset{\infty}{\underset{n=1}{\otimes}} ((w_n\otimes w_n)\Delta_{n}(w_n^*))$ exists in $\mathscr{L}^{\infty}(SU_{\mathbf{q}}(2))\otimes \mathscr{L}^{\infty}(SU_{\mathbf{q}}(2))$, and determines a unitary 2-cocycle $\Omega$ for $\mathscr{L}^{\infty}(SU_{\mathbf{q}}(2))$.\end{Theorem}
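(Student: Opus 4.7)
The plan is to verify the criterion of Lemma~\ref{LemConL2Ten} for the sequence of partial products $\Omega^{(N)} := (\otimesud{k=1}{N}\Omega_k)\otimes 1$, where $\Omega_n := (w_n\otimes w_n)\Delta_n(w_n^*) \in \mathscr{L}^\infty(SU_{q_n}(2))\otimes \mathscr{L}^\infty(SU_{q_n}(2))$ is unitary. Identifying $\mathscr{L}^2(SU_{\mathbf{q}}(2))\otimes \mathscr{L}^2(SU_{\mathbf{q}}(2))$ with an infinite tensor product with distinguished vectors $\xi_n\otimes\xi_n$ (the GNS vectors of $\varphi_n\otimes\varphi_n$, cyclic and separating for each $\mathscr{L}^\infty(SU_{q_n}(2))\otimes \mathscr{L}^\infty(SU_{q_n}(2))$), Lemma~\ref{LemConL2Ten} reduces convergence of $\Omega^{(N)}(\xi_{\mathbf{q}}\otimes\xi_{\mathbf{q}})$ to the summability
\[\sum_n \bigl|\,1-(\varphi_n\otimes\varphi_n)(\Omega_n)\,\bigr|<\infty,\]
and then Lemma~\ref{LemConL2}, together with boundedness of $\Omega^{(N)}$, yields $\sigma$-strong convergence to some $\Omega$. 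Since $(\varphi_n\otimes\varphi_n)(\Omega_n^*)=\overline{(\varphi_n\otimes\varphi_n)(\Omega_n)}$, the same summability controls the adjoints, so convergence is in fact $\sigma$-strong$^*$ and $\Omega$ is unitary.

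The heart of the argument is the estimate $|1-(\varphi_n\otimes\varphi_n)(\Omega_n)|=O(q_n^2)$. Splitting $w_n^*=a_n^*+(w_n^*-a_n^*)$ in the defining expression of $\Omega_n$ produces two summands. For the second, the previous lemma gives $\|(w_n^*-a_n^*)\xi_n\|\leq 3q_n^2$; combined with the invariance identity $(\varphi_n\otimes\varphi_n)\Delta_n(y^*y)=\varphi_n(y^*y)$, this yields $\|\Delta_n(w_n^*-a_n^*)(\xi_n\otimes\xi_n)\|\leq 3q_n^2$, and Cauchy--Schwarz then bounds its contribution by $3q_n^2$. Using $\Delta_n(a_n^*)=a_n^*\otimes a_n^*-q_n\, b_n\otimes b_n^*$, the first summand equals
\[\varphi_n(w_na_n^*)^2 \;-\; q_n\,\varphi_n(w_nb_n)\,\varphi_n(w_nb_n^*),\]
and a direct computation gives $\varphi_n(w_na_n^*)=1+O(q_n^2)$. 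The apparently $O(|q_n|)$ correction vanishes identically: in the representation of Proposition~\ref{PropConcRep}, $\pi(w_nb_n)\xi_{k,0}$ has second index $+1$ and $\pi(w_nb_n^*)\xi_{k,0}$ has second index $-1$, so both are orthogonal to $\xi_{k,0}$ and $\varphi_n(w_nb_n)=\varphi_n(w_nb_n^*)=0$. This cancellation is the crux of the theorem: without it, one gets only $O(|q_n|)$, for which the assumed square-summability of $\mathbf{q}$ would be insufficient for Lemma~\ref{LemConL2Ten}.

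For the cocycle equation, I use that each $\Omega_n$, being a coboundary for $\Delta_n$, satisfies the 2-cocycle equation for $\Delta_n$ by direct expansion using coassociativity of $\Delta_n$. By the inductive construction of $\Delta_{\mathbf{q}}$, its restriction to the $N$-fold finite subalgebra coincides with $\otimesud{k=1}{N}\Delta_k$ (up to the canonical flip), so each $\Omega^{(N)}$ is a 2-cocycle for $\Delta_{\mathbf{q}}$. Since $\Delta_{\mathbf{q}}$ is normal, it is $\sigma$-strong$^*$-continuous on bounded sets, and combined with joint $\sigma$-strong$^*$-continuity of multiplication on bounded sets this allows passage to the limit in the cocycle equation, yielding the 2-cocycle identity for $\Omega$.
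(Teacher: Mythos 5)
Your argument is correct and follows essentially the same route as the paper: identify the GNS space with an infinite tensor product, reduce convergence to summability of $|1-(\varphi_n\otimes\varphi_n)(\Omega_n)|$ via Lemmas \ref{LemConL2} and \ref{LemConL2Ten}, control the error of replacing $w_n$ by $a_n$ through the $3q_n^2$ estimate of the preceding lemma combined with Cauchy--Schwarz and invariance of $\varphi_n$, compute the remaining main term explicitly, and obtain the cocycle identity by passing to the limit in the identities satisfied by the coboundary partial products. The only (harmless) deviations are that you substitute $a_n^*$ for $w_n^*$ only in the leg inside $\Delta_n$, so your main term is $\varphi_n(w_na_n^*)^2$ with a single error term, whereas the paper replaces $w_n$ by $a_n$ in all legs, getting the main term $\varphi_n(a_na_n^*)^2=(1+q_n^2)^{-2}$ and two error terms; and that you make explicit the passage from $\sigma$-strong to $\sigma$-strong$^*$ convergence via the adjoints, which the paper leaves implicit.
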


\begin{proof} Remark that $(\mathscr{L}^{\infty}(SU_{\mathbf{q}}(2))\otimes \mathscr{L}^{\infty}(SU_{\mathbf{q}}(2)), \varphi_{\mathbf{q}}\otimes \varphi_{\mathbf{q}})$ can be identified with \[(\overset{\infty}{\underset{k=1}{\otimes}} (\mathscr{L}^{\infty}(SU_{q_k}(2))\otimes (\mathscr{L}^{\infty}(SU_{q_k}(2)))), \overset{\infty}{\underset{k=1}{\otimes}} (\varphi_{k}\otimes \varphi_{k})).\] Then by the lemmas \ref{LemConL2} and \ref{LemConL2Ten}, it is enough to prove that \[\sum_{n=1}^{\infty} | 1- (\varphi_{n}\otimes \varphi_{n})((w_n\otimes w_n)\Delta_n(w_n^*))| <\infty\] to know that $\overset{n}{\underset{k=1}{\otimes}} ((w_k\otimes w_k)\Delta_{k}(w_k^*))$ converges in the $\sigma$-strong$^*$-topology.\\

\noindent Denoting by $\xi_k$ the GNS vector associated with $\varphi_{k}$, we estimate \begin{eqnarray*} &&\bigback \sum_{n=1}^{\infty} | 1- (\varphi_{n}\otimes \varphi_{n})((w_n\otimes w_n)\Delta_{n}(w_n^*))| \\  &\leq & \sum_{n=1}^{\infty} | 1- (\varphi_{n}\otimes \varphi_{n})((a_n\otimes a_n)\Delta_{n}(a_n^*))| \\ && \,\,+ \sum_{n=1}^{\infty} |(\varphi_{n}\otimes \varphi_{n})((a_n\otimes a_n)(\Delta_{n}(a_n^*)-\Delta_{n}(w_n^*)))| \\ &&\,\, + \sum_{n=1}^{\infty} |(\varphi_{n}\otimes \varphi_{n})((a_n\otimes a_n-w_n\otimes w_n)\Delta_{n}(w_n^*))|\\ &\leq & \sum_{n=1}^{\infty} | 1- (\varphi_{n}\otimes \varphi_{n})((a_n\otimes a_n)\Delta_{n}(a_n^*))| \\ &&\,\,+ \sum_{n=1}^{\infty} |(\varphi_{n}\otimes \varphi_{n})(\Delta_{n}((a_n-w_n)(a_n-w_n)^*))|^{1/2} \\ && \,\,+ \sum_{n=1}^{\infty} |(\varphi_{n}\otimes \varphi_{n})((a_n\otimes a_n-w_n\otimes w_n)(a_n\otimes a_n-w_n\otimes w_n)^*)|^{1/2} \\ &= & \sum_{n=1}^{\infty} | 1- (\varphi_{n}\otimes \varphi_{n})((a_n\otimes a_n)\Delta_{n}(a_n^*))| \\ && \, \,+ \sum_{n=1}^{\infty} \|(a_n-w_n)^*\xi_n\| + \sum_{n=1}^{\infty} \|(a_n\otimes a_n-w_n\otimes w_n)^* (\xi_n\otimes \xi_n)\| \\ &\leq& \sum_{n=1}^{\infty} | 1- (\varphi_{n}\otimes \varphi_{n})((a_n\otimes a_n)\Delta_{n}(a_n^*))| + 3 \sum_{n=1}^{\infty} \|(a_n-w_n)^*\xi_n\|.\end{eqnarray*} By the previous lemma and the square summability of the $q_n$, we have \[\sum_{n=1}^{\infty} \|(a_n-w_n)^*\xi_n\|<\infty.\]

\noindent So we only have to compute if \[\sum_{n=1}^{\infty} | 1- (\varphi_{n}\otimes \varphi_{n})((a_n\otimes a_n)\Delta_{n}(a_n^*))|<\infty.\]

\noindent Now an easy calculation shows that \[(\varphi_{n}\otimes \varphi_{n})((a_n\otimes a_n)\Delta_{n}(a_n^*)) = \frac{1}{(1+q_n^2)^2}.\] Since $1-(\frac{1-q_n^2}{1+q_n^2})^2 \leq 2 q_n^2$, we can again conclude convergence by square summability of the $q_n$.\\

\noindent Thus \[\Omega = \overset{\infty}{\underset{n=1}{\otimes}} ((w_n\otimes w_n)\Delta_{n}(w_n^*))\] is a well-defined unitary as a $\sigma$-strong$^*$ limit of unitaries. Since multiplication is jointly continuous on the group of unitaries with the $\sigma$-strong$^*$ topology, $\Omega$ will satisfy the 2-cocycle identity since each $\overset{n}{\underset{k=1}{\otimes}} ((w_k\otimes w_k)\Delta_{k}(w_k^*))$ does.

\end{proof}

\begin{Lem}\label{LemEstw} Let $0<|q| < 1$, and take $w\in C(SU_q(2))$ as defined by the formula (\ref{Eqnw}). Then the invariant state $\varphi$ for $C(SU_q(2))$ satisfies \[ \varphi \leq q^{-2}\varphi(w^*\,\cdot\, w).\]\end{Lem}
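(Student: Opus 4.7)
The plan is to use the concrete formula from Proposition~\ref{PropConcRep}, namely $\varphi(x) = (1-q^2)\sum_{n\in\mathbb{N}} q^{2n}\langle x\xi_{n,0},\xi_{n,0}\rangle$ (identifying $x$ with $\pi(x)$), and to compute $\varphi(w^*\cdot w)$ by moving $w$ onto the GNS vectors.

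First I compute $w\xi_{n,0}$ from the definition $w = e_{01}+e_{12}+e_{20}+\sum_{k\geq 3}e_{kk}$ and the action $e_{mn}\xi_{r,k} = \delta_{r,n}\xi_{m,k}$: one finds $w\xi_{0,0}=\xi_{2,0}$, $w\xi_{1,0}=\xi_{0,0}$, $w\xi_{2,0}=\xi_{1,0}$, and $w\xi_{n,0}=\xi_{n,0}$ for $n\geq 3$. Thus $w$ cyclically permutes the three vectors $\xi_{0,0},\xi_{1,0},\xi_{2,0}$ and fixes the remaining ones.

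For $x\in C(SU_q(2))^+$, set $\alpha_n := \langle x\xi_{n,0},\xi_{n,0}\rangle \geq 0$. Plugging the formulas for $w\xi_{n,0}$ into
\[
\varphi(w^*xw) \;=\; (1-q^2)\sum_n q^{2n}\langle x\,w\xi_{n,0},\,w\xi_{n,0}\rangle
\]
one obtains
\[
\varphi(w^*xw) \;=\; (1-q^2)\Bigl[\alpha_2 + q^2\alpha_0 + q^4\alpha_1 + \sum_{n\geq 3} q^{2n}\alpha_n\Bigr],
\]
whereas
\[
q^2\,\varphi(x) \;=\; (1-q^2)\Bigl[q^2\alpha_0 + q^4\alpha_1 + q^6\alpha_2 + \sum_{n\geq 3} q^{2n+2}\alpha_n\Bigr].
\]
A term-by-term comparison then suffices: the $\alpha_0$ and $\alpha_1$ coefficients coincide, $1\geq q^6$ handles $\alpha_2$, and $q^{2n}\geq q^{2n+2}$ handles the tail $n\geq 3$. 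This gives $q^2\varphi(x)\leq \varphi(w^*xw)$, which is the claimed inequality on $C(SU_q(2))^+$ and extends in the standard way to the nsf weight $\varphi$ on $\mathscr{L}^\infty(SU_q(2))^+$.

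There is no serious obstacle: the entire content is that $w$ reshuffles the weights $(1,q^2,q^4)$ on the first three summands into $(q^2,q^4,1)$, and the factor $q^{-2}$ in the statement exactly absorbs the worst of these jumps. The equality attained in the $\alpha_0$ and $\alpha_1$ terms even shows that the constant $q^{-2}$ is optimal.
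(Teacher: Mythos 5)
Your computation is correct and is exactly the ``straightforward computation, using the concrete form of $\varphi$'' that the paper's one-line proof alludes to: $w$ cyclically permutes $\xi_{0,0},\xi_{1,0},\xi_{2,0}$ and fixes $\xi_{n,0}$ for $n\geq 3$, and comparing coefficients term by term gives $q^2\varphi(x)\leq\varphi(w^*xw)$ for positive $x$. Your closing observation that the constant $q^{-2}$ is optimal is also accurate.
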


\begin{proof} This follows from a straightforward computation, using the concrete form of $\varphi$ as in Proposition \ref{PropConcRep}.

\end{proof}

\noindent Hence we can form on $\mathscr{L}^{\infty}(SU_{\mathbf{q}}(2))$ the normal faithful weight \[\varphi_{\Omega} := \lim_{n\rightarrow \infty} (\prod_{k=1}^n q_k^{-2}) ((\overset{n}{\underset{k=1}{\otimes}}\varphi_{k}(w_k^* \,\cdot \, w_k))\otimes (\overset{\infty}{\underset{k=n+1}{\otimes}}\varphi_{k})),\] the limit being taken pointwise on elements of $\mathscr{L}^{\infty}(SU_{\mathbf{q}}(2))^+$. This is a well-defined normal faithful weight, since it is an increasing sequence of normal, faithful, positive functionals. It is clear that $\varphi_{\Omega}$ is not finite.

\begin{Prop}\label{LemSemFin} The weight $\varphi_{\Omega}$ on $\mathscr{L}^{\infty}(SU_{\mathbf{q}}(2))$ is semi-finite.\end{Prop}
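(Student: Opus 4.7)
The plan is to exhibit a $\sigma$-weakly dense subset of $\mathscr{N}_{\varphi_\Omega}$, since semi-finiteness of $\varphi_\Omega$ amounts exactly to this. The natural candidates are the cutoff projections $s_n = 1 \otimes (\overset{\infty}{\underset{k=n+1}{\otimes}} p_k)$ from Corollary \ref{CorDens}: by Lemma \ref{LemInfTenp} they converge $\sigma$-strongly to $1$, and once $\varphi_\Omega(s_n)<\infty$ is known for every $n$, the left-ideal property of $\mathscr{N}_{\varphi_\Omega}$ combined with $xs_n \to x$ will yield the required density.

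The central computation is to show $\varphi_\Omega(s_n)<\infty$. A quick verification on basis vectors in the representation of Proposition \ref{PropConcRep} (using that $w_k$ cyclically permutes $\xi_{0,\cdot}, \xi_{1,\cdot}, \xi_{2,\cdot}$ and fixes the rest) yields the identity $w_k^* p_k w_k = p_k'$, so $\varphi_k(w_k^* p_k w_k) = \varphi_k(p_k') = q_k^2(1-q_k^2)$. For each $m \geq n$, the $m$-th approximating functional applied to $s_n$ factorizes as
$$\Big(\prod_{k=1}^n q_k^{-2}\Big) \prod_{k=n+1}^m \bigl(q_k^{-2}\cdot q_k^2(1-q_k^2)\bigr) \prod_{k=m+1}^\infty (1-q_k^2) = \Big(\prod_{k=1}^n q_k^{-2}\Big) \prod_{k=n+1}^\infty (1-q_k^2),$$
which is independent of $m$ and finite, since square summability of $(q_k)$ ensures $\prod_{k>n}(1-q_k^2)$ converges to a nonzero number. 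Passing to the limit gives $\varphi_\Omega(s_n) = (\prod_{k=1}^n q_k^{-2}) \prod_{k>n}(1-q_k^2) < \infty$. This is the heart of the argument: the normalization factors $q_k^{-2}$ in the definition of $\varphi_\Omega$ were tailored precisely to cancel the $q_k^2$ coming from $\varphi_k(p_k')$, rendering the approximating sequence stationary on $s_n$.

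Since $s_n$ is a projection with $\varphi_\Omega(s_n)<\infty$, it lies in $\mathscr{N}_{\varphi_\Omega}$, and the left-ideal property gives $xs_n \in \mathscr{N}_{\varphi_\Omega}$ for every $x \in \mathscr{L}^\infty(SU_{\mathbf{q}}(2))$ (alternatively, $(xs_n)^*(xs_n) \leq \|x\|^2 s_n$). By Lemma \ref{LemInfTenp}, $s_n \to 1$ $\sigma$-strongly, hence $xs_n \to x$ $\sigma$-strongly and in particular $\sigma$-weakly. Therefore $\mathscr{N}_{\varphi_\Omega}$ is $\sigma$-weakly dense in $\mathscr{L}^\infty(SU_{\mathbf{q}}(2))$, which is semi-finiteness of $\varphi_\Omega$.

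The main obstacle I anticipate is the algebraic identity $w_k^* p_k w_k = p_k'$ together with the bookkeeping for the telescoping infinite products; once these are in hand, the passage from $s_n \in \mathscr{N}_{\varphi_\Omega}$ to semi-finiteness is routine.
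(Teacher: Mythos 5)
Your proof is correct and follows essentially the same route as the paper: the paper likewise shows the projections $s_n = 1\otimes(\overset{\infty}{\underset{k=n+1}{\otimes}} p_k)$ are integrable via the identity $w_k^* p_k w_k = p_k'$ and $\varphi_k(p_k') = q_k^2(1-q_k^2)$, and then invokes Corollary \ref{CorDens} for density. Your version merely spells out the telescoping product computation and the left-ideal argument that the paper leaves implicit.
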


\begin{proof} We prove that the projections $1\otimes (\overset{\infty}{\underset{k=n+1}{\otimes}} p_k)$, introduced in Lemma \ref{LemInfTenp}, are integrable with respect to $\varphi_{\Omega}$. By Corollary \ref{CorDens}, this will prove the proposition.\\

\noindent But $w_n^* p_n w_n = p'_n$, using also the notation of (\ref{Eqnp'}). Since $\varphi_{n}(p'_n) = q_n^2(1-q_n^2)$, the integrability of all $1\otimes (\overset{\infty}{\underset{k=n+1}{\otimes}} p_k)$ follows.

\end{proof}

\noindent We now end by proving that $\varphi_{\Omega}$ is an invariant nsf weight for the couple $(\mathscr{L}^{\infty}(SU_{\mathbf{q}}(2)),\Delta_{\Omega})$, where $\Omega$ was introduced in Theorem \ref{PropConv2coc} and $\varphi_{\Omega}$ just before Lemma \ref{LemSemFin}, and where $\Delta_{\Omega}$ is the twisted coproduct defined by (\ref{Def2Coc}).

\begin{Theorem} The nsf weight $\varphi_{\Omega}$ is a left and right invariant nsf weight for $(\mathscr{L}^{\infty}(SU_{\mathbf{q}}(2)),\Delta_{\Omega})$.\end{Theorem}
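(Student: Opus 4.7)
The central idea is to realize $\varphi_\Omega$ as a monotone limit of rescaled invariant states for a sequence of finite coboundary twists. Set $u_n := w_1 \otimes w_2 \otimes \cdots \otimes w_n$ (extended by $1$'s to a unitary in $M := \mathscr{L}^\infty(SU_{\mathbf{q}}(2))$) and $\alpha_n := \mathrm{Ad}(u_n)$. A direct computation gives $\tilde\Omega_n := \overset{n}{\underset{k=1}{\otimes}}((w_k \otimes w_k)\Delta_k(w_k^*)) = (u_n \otimes u_n)\Delta(u_n^*)$, hence $\Delta_{\tilde\Omega_n} = (\alpha_n \otimes \alpha_n) \circ \Delta \circ \alpha_n^{-1}$ and $(M, \Delta_{\tilde\Omega_n}) \cong (M, \Delta)$ as quantum groups via $\alpha_n$. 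Consequently the pushforward state $\varphi \circ \alpha_n^{-1} = (\overset{n}{\underset{k=1}{\otimes}} \varphi_k(w_k^* \cdot w_k)) \otimes (\overset{\infty}{\underset{k=n+1}{\otimes}}\varphi_k)$ is both left and right invariant for $\Delta_{\tilde\Omega_n}$. By Lemma \ref{LemEstw}, the rescaled functionals $\varphi_{\Omega,n} := c_n(\varphi \circ \alpha_n^{-1})$ with $c_n := \prod_{k=1}^n q_k^{-2}$ form a monotone increasing sequence of normal positive functionals with pointwise supremum $\varphi_\Omega$ on $M^+$.

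The second step is to pass from the finite to the full twist. Because $\Omega$ is an infinite tensor product over disjoint pairs of tensor slots, it factors as $\Omega = \tilde\Omega_n \cdot \Omega'_n$ where $\Omega'_n := \overset{\infty}{\underset{k=n+1}{\otimes}}((w_k \otimes w_k)\Delta_k(w_k^*))$ acts only on the slots indexed by $k > n$ and commutes with $\tilde\Omega_n$. By Theorem \ref{PropConv2coc}, $\Omega'_n \to 1$ in $\sigma$-strong$^*$, so the identity $\Delta_\Omega(x) = \Omega'_n \Delta_{\tilde\Omega_n}(x)(\Omega'_n)^*$ and the joint $\sigma$-strong$^*$ continuity of multiplication on bounded sets give $\Delta_{\tilde\Omega_n}(x) \to \Delta_\Omega(x)$ for every $x \in M$. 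For $x \in \mathscr{M}^+_{\varphi_\Omega}$ and a normal state $\omega \in M_*$, the finite-stage invariance identity
$$\varphi_{\Omega,n}\bigl((\omega \otimes \iota)\Delta_{\tilde\Omega_n}(x)\bigr) = \omega(1)\,\varphi_{\Omega,n}(x)$$
combined with the monotonicity $\varphi_{\Omega,n} \nearrow \varphi_\Omega$, the normality of each $\varphi_{\Omega,n}$ on bounded $\sigma$-strong$^*$-convergent nets, and a Fatou-type limit argument yields the sub-invariance $\varphi_\Omega((\omega \otimes \iota)\Delta_\Omega(x)) \leq \omega(1)\varphi_\Omega(x)$.

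The principal obstacle is upgrading this to equality. The approach is to exploit Corollary \ref{CorDens}: it suffices to verify the identity on a $\sigma$-strongly dense cone of elements of the form $x = y \otimes \overset{\infty}{\underset{k=n+1}{\otimes}}p_k$ with $y \in M_{[1,n]}^+$, where $M_{[1,n]}$ denotes operators supported on the first $n$ tensor factors. On such $x$ one computes $\Delta_\Omega(x) = \Delta_{\tilde\Omega_n}(y)\cdot\Delta_{\Omega'_n}(1\otimes\overset{\infty}{\underset{k=n+1}{\otimes}}p_k)$, and $\varphi_\Omega$ decouples into a product of $c_n\psi_n(y)$ (with $\psi_n := \overset{n}{\underset{k=1}{\otimes}}\varphi_k(w_k^*\cdot w_k)$) and a tail contribution. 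The $M_{[1,n]}$ piece is handled by the exact invariance of $\psi_n$ for $\Delta_{\tilde\Omega_n}$ from the first paragraph; the tail is controlled by letting $n\to\infty$ so that $\Omega'_n \to 1$, reducing the tail twist to the untwisted coproduct where the $\Delta_k$-invariance of each $\varphi_k$ supplies the matching identity. Exact invariance on this dense class combined with the sub-invariance above forces equality on all of $\mathscr{M}^+_{\varphi_\Omega}$. Right invariance follows by the symmetric argument, since $\varphi \circ \alpha_n^{-1}$ is also right invariant for $\Delta_{\tilde\Omega_n}$.
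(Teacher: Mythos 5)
Your first two steps are sound: the identification $\tilde\Omega_n=(u_n\otimes u_n)\Delta(u_n^*)$ with $\Delta_{\tilde\Omega_n}=(\alpha_n\otimes\alpha_n)\circ\Delta\circ\alpha_n^{-1}$ correctly exhibits $\varphi_{\Omega,n}$ as an invariant functional for the $n$-th coboundary twist, and the Fatou argument (fix $m$, use $\varphi_{\Omega,m}\leq\varphi_{\Omega,n}$ for $n\geq m$, let $n\to\infty$ using normality of $\varphi_{\Omega,m}$ and $\Delta_{\tilde\Omega_n}(x)\to\Delta_\Omega(x)$, then take the supremum over $m$) does yield the sub-invariance $(\iota\otimes\varphi_\Omega)(\Delta_\Omega(x))\leq\varphi_\Omega(x)1$ on all of $M^+$. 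This is a genuinely different, and cleaner, route to the upper bound than the paper's direct estimate of the approximants $z_m$ via Lemma \ref{LemEstw}.

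The gap is in the last sentence of your third step: ``exact invariance on this dense class combined with the sub-invariance above forces equality on all of $\mathscr{M}^+_{\varphi_\Omega}$.'' This does not follow from $\sigma$-strong density of $\mathscr{P}$, because $\varphi_\Omega$ is only lower semicontinuous: a general $x\in\mathscr{M}^+_{\varphi_\Omega}$ is approximated by $s_Nxs_N\in\mathscr{P}$ only $\sigma$-strongly, and neither $\varphi_\Omega(s_Nxs_N)\to\varphi_\Omega(x)$ nor the convergence of the left-hand sides is automatic (the approximation is not monotone, so normality does not apply). This is precisely why the paper proves that $\mathscr{P}$ is a $\sigma$-strong--norm \emph{core} for $\Lambda_{\varphi_\Omega}$, which in turn requires showing $\sigma_t^{\varphi_\Omega}(s_n)=s_n$ via the KMS-type computation $\varphi_\Omega(s_nx)=\varphi_\Omega(xs_n)$ (using $w_k^*p_kw_k=p_k'$ and the fact that $p_k,p_k'$ lie in the centralizer of $\varphi_k$); only then can the isometry $W_\Omega^*$ be extended from $\mathscr{N}_{\varphi_\Omega}\odot\mathscr{P}$ to all of $\mathscr{N}_{\varphi_\Omega}\odot\mathscr{N}_{\varphi_\Omega}$ by closedness of $\Lambda_{\varphi_\Omega}\otimes\Lambda_{\varphi_\Omega}$. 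No modular theory appears anywhere in your plan, so this step is missing an essential idea rather than merely a detail. A secondary issue: your treatment of exact invariance on $\mathscr{P}$ itself is too vague at the tail --- for a fixed $y=x\otimes(\overset{\infty}{\underset{k=n+1}{\otimes}}p_k)$ the index $n$ cannot be ``let to infinity,'' and what is actually needed is that the tail products $\prod_{k>m}(\varphi_k\otimes\varphi_k)\bigl((w_k\otimes w_k)\Delta_k(p_k')(w_k^*\otimes w_k^*)\bigr)$ tend to $1$ as $m\to\infty$, which the paper obtains from the non-vanishing of $(\varphi_{\mathbf{q}}\otimes\varphi_{\mathbf{q}})(\Omega\Delta_{\mathbf{q}}(s)\Omega^*)$, i.e.\ from faithfulness of $\varphi_{\mathbf{q}}$. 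Your sketch does not identify this convergence fact, and without it even the dense-cone identity is unproved.
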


\begin{proof} We only prove left invariance, since the proof for right invariance follows by symmetry.\\

\noindent Denote \[\mathscr{P}_n = \{ x\otimes (\overset{\infty}{\underset{k=n+1}{\otimes}} p_k) \mid x\in \overset{n}{\underset{k=1}{\otimes}} \mathscr{L}^{\infty}(SU_{q_k}(2))\},\] still using the notation (\ref{Eqnp}), and denote $\mathscr{P} = \bigcup_{n\in \mathbb{N}_0} \mathscr{P}_n $. By the proof of Lemma \ref{LemSemFin}, we know that $\mathscr{P}$ consists of integrable elements for $\varphi_{\Omega}$. We first show that also $\Delta_{\Omega}(\mathscr{P})\subseteq \mathscr{M}_{(\iota\otimes \varphi_{\Omega})}$, and that \[(\iota \otimes \varphi_{\Omega})(\Delta_{\Omega}(y)) = \varphi_{\Omega}(y)1\] for $y$ in $\mathscr{P}$. \\

\noindent Choose $x= \otimesud{k=1}{n} x_n \in \overset{n}{\underset{k=1}{\otimes}} \mathscr{L}^{\infty}(SU_{q_k}(2))$ with all $x_n$ positive, and put $y = x\otimes (\otimesud{k=n+1}{\infty} p_k)$. Then $(\iota\otimes \varphi_{\Omega})(\Delta_{\Omega}(y))$ is an element in the extended positive cone of $\mathscr{L}^{\infty}(SU_{\mathbf{q}}(2))$. As such, it is the pointwise supremum of the elements \[ z_m = (\otimesud{k=1}{n} \alpha_k) \otimes (\otimesud{k=n+1}{n+m}\beta_k) \otimes (\otimesud{k=n+m+1}{\infty}\gamma_k),\] regarded as semi-linear functionals on $\mathscr{L}^{\infty}(SU_{\mathbf{q}}(2))_*^+$, where \[\alpha_k = q_k^{-2}(\iota\otimes \varphi_{k}(w_k^*\,\cdot\,w_k))((w_k\otimes w_k)\Delta_{k}(w_k^*x_kw_k)(w_k^*\otimes w_k^*)),\] \[\beta_k = q_k^{-2}(\iota\otimes \varphi_{k}(w_k^*\,\cdot\,w_k))((w_k\otimes w_k)\Delta_{k}(w_k^*p_kw_k)(w_k^*\otimes w_k^*))\] and \[\gamma_k = (\iota\otimes \varphi_{k})((w_k\otimes w_k)\Delta_{k}(w_k^*p_kw_k)(w_k^*\otimes w_k^*)),\] and where the infinite tensor product is to be seen as a $\sigma$-strong limit.\\

\noindent We can simplify $\alpha_k$ and $\beta_k$ to respectively $q_k^{-2}\varphi_{k}(w_k^*x_kw_k)1$ and $q_k^{-2}\varphi_{k}(w_k^*p_kw_k)1$, by invariance of $\varphi_{k}$, while $\gamma_k$ satisfies \[\gamma_k \leq q_k^{-2}\varphi_{k}(w_k^*p_kw_k)1\] by Lemma \ref{LemEstw} and left invariance of $\varphi_{k}$. This implies that $z_m \leq \varphi_{\Omega}(y)1$. Since $z_m$ is increasing, and the invariant state $\varphi_{\mathbf{q}}$ on $\mathscr{L}^{\infty}(SU_{\mathbf{q}}(2))$ is a faithful normal state, we only have to prove that $\varphi_{\mathbf{q}}(z_m) \rightarrow \varphi_{\Omega}(y)$ to conclude that $(\iota\otimes \varphi_{\Omega})(\Delta_{\Omega}(y)) = \varphi_{\Omega}(y)1$.\\

\noindent But it is easily seen that $\varphi_{\mathbf{q}}(z_m)$ converges to $\varphi_{\Omega}(y)$ if we can show that \[\prod_{k=n+m+1}^{\infty}\varphi_{k}(\gamma_k) \underset{m\rightarrow \infty}{\rightarrow} 1.\] This is equivalent with proving that \[\prod_{1}^{\infty} (\varphi_{k}\otimes \varphi_{k})((w_k\otimes w_k)\Delta_{k}(w_k^*p_kw_k)(w_k^*\otimes w_k^*)) \neq 0.\] Since the left hand side equals $(\varphi_{\mathbf{q}}\otimes \varphi_{\mathbf{q}})(\Omega\Delta_{\mathbf{q}}(s)\Omega^*)$, with $s = \otimesud{k=1}{\infty} p_k$, the above product is indeed non-zero, by faithfulness of $\varphi_{\mathbf{q}}$.\\

\noindent One then easily concludes that, since any $y\in \mathscr{P}$ is a linear combination of elements of the above form, we have $\Delta_{\Omega}(y) \in\mathscr{M}_{\iota\otimes \varphi_{\Omega}}$ for $y\in \mathscr{P}$, with $(\iota\otimes \varphi_{\Omega})\Delta_{\Omega}(y) = \varphi_{\Omega}(y)$.\\

\noindent Next we prove that $\mathscr{P}$ is a $\sigma$-strong-norm core for the GNS-map $\Lambda_{\varphi_{\Omega}}$ associated with the nsf weight $\varphi_{\Omega}$. For this, it is enough to prove that $s_n = 1\otimes (\overset{\infty}{\underset{k=n+1}{\otimes}} p_k)$ is invariant under $\sigma_t^{\varphi_{\Omega}}$ for any $t\in \mathbb{R}$, since then, using Corollary \ref{CorDens}, we can conclude that, whenever $y\in \mathscr{N}_{\varphi_{\Omega}}$, we will have $s_nys_n\rightarrow y$ in the $\sigma$-strong topology and $\Lambda_{\varphi_{\Omega}}(s_nys_n) = s_n J_{\varphi_{\Omega}} s_nJ_{\varphi_{\Omega}} \Lambda_{\varphi_{\Omega}}(y) \rightarrow \Lambda_{\varphi_{\Omega}}(y)$ in the norm topology (where $J_{\varphi_{\Omega}}$ is the modular conjugation on $\mathscr{L}^2(M,\varphi_{\Omega})$).\\

\noindent By formula (A 1.4) of \cite{Wor1} (or by a direct verification), we have that $\sigma_t^{\varphi_k}(a_k) =|q|^{2it}a_k$ for any $t\in \mathbb{R}$. Hence $a_ka_k^*$ is in the centralizer of $\varphi_k$, and so the same is true of $p_k$ and $p_k'$. Since $s_n\in \mathscr{M}_{\varphi_\Omega}$, we will have $xs_n, s_nx \in \mathscr{M}_{\varphi_{\Omega}}$ for $x\in \mathscr{N}_{\varphi_{\Omega}}\cap \mathscr{N}_{\varphi_{\Omega}}^*$, and then \begin{eqnarray*} \varphi_{\Omega}(s_nx) &=& \lim_{m\rightarrow \infty} (\prod_{k=1}^{n+m} q_k^{-2}) ((\overset{n}{\underset{k=1}{\otimes}}\varphi_{k}(w_k^* \,\cdot \, w_k))\otimes (\overset{n+m}{\underset{k=n+1}{\otimes}}\varphi_{k}(w_k^*p_k \,\cdot \, w_k))\otimes  (\overset{\infty}{\underset{k=n+m+1}{\otimes}}\varphi_{k}(p_k \,\cdot \,)))(x) \\ &=& \lim_{m\rightarrow \infty} (\prod_{k=1}^{n+m} q_k^{-2}) ((\overset{n}{\underset{k=1}{\otimes}}\varphi_{k}(w_k^* \,\cdot \, w_k))\otimes (\overset{n+m}{\underset{k=n+1}{\otimes}}\varphi_{k}(p_k'w_k^* \,\cdot \, w_k))\otimes  (\overset{\infty}{\underset{k=n+m+1}{\otimes}}\varphi_{k}(p_k \,\cdot \,)))(x) \\ &=& \lim_{m\rightarrow \infty} (\prod_{k=1}^{n+m} q_k^{-2}) ((\overset{n}{\underset{k=1}{\otimes}}\varphi_{k}(w_k^* \,\cdot \, w_k))\otimes (\overset{n+m}{\underset{k=n+1}{\otimes}}\varphi_{k}(w_k^* \,\cdot \, w_kp_k'))\otimes  (\overset{\infty}{\underset{k=n+m+1}{\otimes}}\varphi_{k}( \,\cdot \,p_k)))(x) \\ &=& \varphi_{\Omega}(xs_n).\end{eqnarray*} From this, the equality $\sigma_t^{\varphi_{\Omega}}(s_n)=s_n$ for any $t\in \mathbb{R}$ follows (for example by Theorem VIII.2.6 of \cite{Tak1}).\\

\noindent We can now conclude the proof. By left-invariance of $\varphi_{\Omega}$ on $\mathscr{P}$, we can introduce an isometry $W_{\Omega}^*$ on $\mathscr{L}^2(SU_{\mathbf{q}}(2),\varphi_{\Omega}) \otimes \mathscr{L}^2(SU_{\mathbf{q}}(2),\varphi_{\Omega})$ by putting \[W_{\Omega}^* (\Lambda_{\varphi_{\Omega}}(x)\otimes \Lambda_{\varphi_{\Omega}}(y)) = (\Lambda_{\varphi_{\Omega}}\otimes \Lambda_{\varphi_{\Omega}})(\Delta_{\Omega}(y)(x\otimes 1))\] for $x\in \mathscr{N}_{\varphi_{\Omega}}$ and $y\in \mathscr{P}$. Then by the core-property of $\mathscr{P}$, we conclude that for  $x\in \mathscr{N}_{\varphi_{\Omega}}$ and \emph{any} $y\in \mathscr{N}_{\varphi_{\Omega}}$, we have $\Delta_{\Omega}(y)(x\otimes 1)$ square integrable for $\varphi_{\Omega}\otimes \varphi_{\Omega}$, with \[W_{\Omega}^* (\Lambda_{\varphi_{\Omega}}(x)\otimes \Lambda_{\varphi_{\Omega}}(y)) = (\Lambda_{\varphi_{\Omega}}\otimes \Lambda_{\varphi_{\Omega}})(\Delta_{\Omega}(y)(x\otimes 1)).\] Hence \[\varphi_{\Omega}((\omega \otimes \iota)\Delta_{\Omega}(y)) = \varphi_{\Omega}(y)\] for $y\in \mathscr{M}_{\varphi_{\Omega}}^+$ and $\omega$ a state of the form $\langle \, \cdot \, \Lambda_{\varphi_{\Omega}}(x),\Lambda_{\varphi_{\Omega}}(x)\rangle$ with $x\in \mathscr{N}_{\varphi_{\Omega}}$. Hence the elements $(\iota\otimes \varphi_{\Omega})(\Delta_{\Omega}(y))$ and $\varphi_{\Omega}(y)1$ in the extended cone of $M$ are equal on a normdense subset of $M_*^+$. Since the latter element is bounded, the same is true of the former by lower-semi-continuity, and then their equality everywhere follows.\\

\end{proof}

\noindent Since the C$^*$-algebra underlying a non-compact von Neumann algebraic quantum group is non-unital, we obtain the following corollary.

\begin{Cor}\label{CorTop} There exists a von Neumann algebraic quantum group $(M,\Delta)$ and a unitary 2-cocycle $\Omega\in M\otimes M$, such that the reduced (resp.~universal) C$^*$-algebra associated to $(M,\Delta)$ is \emph{not} isomorphic to the reduced (resp.~universal) C$^*$-algebra associated to $(M,\Delta_{\Omega})$, the $\Omega$-twisted von Neumann algebraic quantum group.\end{Cor}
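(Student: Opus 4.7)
The plan is to take $(M,\Delta) = (\mathscr{L}^{\infty}(SU_{\mathbf{q}}(2)),\Delta_{\mathbf{q}})$ with $\mathbf{q}$ a square summable sequence with all $0 < |q_k| <1$, and to take $\Omega$ to be the unitary 2-cocycle produced in Theorem \ref{PropConv2coc}. By the preceding theorem, $(M,\Delta_{\Omega})$ is a genuine von Neumann algebraic quantum group, with $\varphi_{\Omega}$ a left and right invariant nsf weight. On the untwisted side, $(M,\Delta)$ arises from the compact quantum group $(C(SU_{\mathbf{q}}(2)),\Delta_{\mathbf{q}})$, and since this compact quantum group is coamenable (as noted in the text, infinite tensor products of coamenable compact quantum groups), its reduced and universal C$^*$-algebras coincide and are unital, both containing the unit $1\in M$.

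The core of the argument is then to show $(M,\Delta_{\Omega})$ is not of compact type. By the characterization recalled right after Definition \ref{DefvNeuQG}, it suffices to exhibit that there is no left invariant normal \emph{state} on $(M,\Delta_{\Omega})$. By uniqueness of the left invariant weight (up to a positive scalar), any left invariant nsf weight is proportional to $\varphi_{\Omega}$; but $\varphi_{\Omega}$ is explicitly not finite, as already observed just before Proposition \ref{LemSemFin} (one has $\varphi_{\Omega}(1) = \lim_n \prod_{k=1}^{n} q_k^{-2} = +\infty$). Thus no positive scalar multiple of $\varphi_{\Omega}$ is a state, so $(M,\Delta_{\Omega})$ admits no left invariant normal state and is therefore not compact.

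Next I pass to the associated C$^*$-algebraic quantum groups. For any locally compact quantum group in the sense of Kustermans--Vaes, the reduced and universal underlying C$^*$-algebras are unital if and only if the quantum group is compact; indeed, unitality forces the left invariant weight to be bounded on the unit, hence finite, which by the above characterization is precisely compactness. Applying this to $(M,\Delta_{\Omega})$, whose reduced and universal C$^*$-algebras I denote $C_r(M,\Delta_{\Omega})$ and $C_u(M,\Delta_{\Omega})$ respectively, we conclude that both are non-unital.

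Combining these two points, the reduced C$^*$-algebra attached to $(M,\Delta)$ is unital while $C_r(M,\Delta_{\Omega})$ is not, so they cannot be $^*$-isomorphic; and likewise for the universal versions. The main conceptual step (rather than any calculation) is the invocation of the standard equivalence \emph{compactness $\Longleftrightarrow$ unitality of the C$^*$-algebra} for locally compact quantum groups; once this is in hand, the corollary is immediate from the existence of the explicit twist $\Omega$ and the non-finiteness of $\varphi_{\Omega}$ established in the preceding theorem.
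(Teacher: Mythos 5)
Your proposal is correct and follows essentially the same route as the paper, which simply observes that the C$^*$-algebra underlying a non-compact von Neumann algebraic quantum group is non-unital and that the twisted quantum group is non-compact because its invariant weight $\varphi_{\Omega}$ is not finite. Your additional elaboration (uniqueness of the invariant weight up to a scalar, and the equivalence of compactness with unitality of the associated C$^*$-algebras) just makes explicit the standard facts the paper leaves implicit.
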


\noindent \emph{Remarks:} 1. Note that the compact quantum group $\mathscr{L}^{\infty}(SU_{\mathbf{q}}(2))$ is \emph{not} a compact matrix quantum group. It would therefore be interesting to see if one can also twist compact \emph{matrix} quantum groups into non-compact locally compact quantum groups. By the results of \cite{DeC3}, this is closely related to the question whether a compact matrix quantum group can act ergodically on an infinite-dimensional type $I$-factor.\\

\hspace{1.1cm} 2. It follows from the results of \cite{DeC1} that if $(C(\mathfrak{G}),\Delta)$ is a compact quantum group, and $\Omega$ a unitary 2-cocycle \emph{inside} $\textrm{Pol}(\mathfrak{G})\odot \textrm{Pol}(\mathfrak{G})$, then the cocycle twisted von Neumann algebraic quantum group is again compact, with $\textrm{Pol}(\mathfrak{G})$ as the $^*$-algebra underlying the associated Hopf $^*$-algebra. Hence also the associated C$^*$-algebras remain unaltered.\\

\noindent \emph{Acknowledgements:} I would like to thank my thesis advisor Alfons Van Daele for his unwavering support of my work, and Stefaan Vaes for valuable suggestions concerning the presentation of the results in this article.

\end{document}